
\documentclass{amsproc}
\usepackage{eurosym}
\usepackage{amssymb}
\usepackage{amsfonts}
\usepackage[utf8]{inputenc}
\usepackage[OT4]{fontenc}

\setcounter{MaxMatrixCols}{10}

\def\cal{\mathcal}

\newtheorem{defi}{Definition}[section]

\newtheorem{theo}[defi]{Theorem}
\newtheorem{co}[defi]{Corollary}
\newtheorem{pr}[defi]{Proposition}
\newtheorem{re}[defi]{Remark}

\font\tenmsy=msbm10

\def\Bbb#1{\hbox{\tenmsy#1}} 

\setcounter{section}{0}

\address[Z. Jelonek]{Instytut Matematyki, Polska Akademia Nauk, \'Sw. Tomasza 30, 31-027 Krak\'ow, Poland} 
\email{najelone@cyf-kr.edu.pl}

\address[M. Laso\'n]{Instytut Matematyki, Polska Akademia Nauk, \'Sw. Tomasza 30, 31-027 Krak\'ow, Poland;
Wydzia{\l} Matematyki i Informatyki, Uniwersytet Jagiello\'nski, Prof. S. \L ojasiewicza 6, 30-348 Krak\'ow, Poland}
\email{michalason@gmail.com}

\keywords{affine variety, unipotent algebraic group, the set of fixed points} 
\subjclass{14L17.}
\thanks{The first author was partially supported by the grant of Polish Ministry of Science, 2006-2009}

\begin{document}

\title[The set of fixed points]{The set of fixed points of a unipotent group}

\begin{abstract}
Let $K$ be an algebraically closed field. Let $G$ be a non-trivial
connected unipotent group, which acts effectively on an affine
variety $X.$ Then every non-empty component $R$ of the set of
fixed points of $G$ is a $K$-uniruled variety, i.e, there exists
an affine cylinder $W\times K$ and a dominant, generically-finite
polynomial mapping $\phi : W\times K\rightarrow R.$ We show also
that if an arbitrary infinite algebraic group $G$ acts effectively
on $K^n$ and the set of fixed points contains a hypersurface $H,$
then this hypersurface is $K$-uniruled.
\end{abstract}

\maketitle

\section{Introduction.}
Let $K$ be an algebraically closed field (of arbitrary characteristic). Let $G$ be a connected unipotent
algebraic group, which acts effectively on a variety $X.$ The set
of fixed points of this action was studied intensively (see e.g.,
\cite{b-b},  \cite{car}, \cite{gross}, \cite{hor}). In particular
Bia\l ynicki-Birula has proved that if $X$ is an affine variety,
then $G$ has no isolated fixed points.

Here we consider the case when $X$ is an affine variety. We
generalize the result of Bia\l ynicki-Birula and we prove, that
the set $Fix(G)$ of fixed points of $G$ is in fact a $K$-uniruled
variety. In particular for every  point $x\in Fix(G)$ there is a
polynomial curve $\phi : K\to Fix(G)$ such that $\phi(0)=x.$

We show also that if an arbitrary infinite algebraic group $G$
acts effectively on $K^n$ and the set of fixed points contains a
hypersurface $H,$ then this hypersurface is $K$-uniruled. This
generalizes \cite{jel}.

\section{ Preliminaries.}
At the begining  we recall some basic facts about $K$-uniruled
varieties (see \cite{jel?}).

\begin{pr} Let $\Gamma$ be an affine curve. The following two statements
are equivalent:

1) there exists a regular birational map $\phi: K \rightarrow
\Gamma$;

2) there exists a regular dominant map $\phi:K \rightarrow
\Gamma$.
\end{pr}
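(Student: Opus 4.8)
The implication $1)\Rightarrow 2)$ is immediate, since a birational map is in particular dominant, so the whole content lies in proving $2)\Rightarrow 1)$. The plan is to pass to function fields. A regular dominant map $\phi:K\to\Gamma$ from the affine line to the affine curve $\Gamma$ (both of dimension one) induces an inclusion of function fields $K(\Gamma)\hookrightarrow K(t)$, where $K(t)$ is the field of rational functions in one variable, i.e. the function field of the affine line. This exhibits $K(\Gamma)$ as an intermediate field $K\subsetneq K(\Gamma)\subseteq K(t)$ of transcendence degree one over $K$, so by L\"uroth's theorem (which holds over any field, in every characteristic) $K(\Gamma)$ is $K$-isomorphic to a rational function field $K(s)$. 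Hence $\Gamma$ is a rational curve. The goal is now to upgrade this birational rationality to an honest \emph{regular} birational parametrization by the whole of $K$.

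The next step is to take the normalization $\nu:\widetilde{\Gamma}\to\Gamma$, which is a finite birational morphism; here $\widetilde{\Gamma}$ is a smooth affine curve with $K(\widetilde{\Gamma})=K(\Gamma)\cong K(s)$, so $\widetilde{\Gamma}$ is a smooth rational affine curve and therefore isomorphic to an open subset of the projective line, say $\widetilde{\Gamma}\cong \mathbb{P}^1\setminus\{p_1,\dots,p_k\}$ with $k\geq 1$. Since the affine line $K$ is smooth, hence normal, the universal property of normalization lets me factor the dominant map $\phi$ through $\nu$, producing a dominant morphism $\widetilde{\phi}:K\to\widetilde{\Gamma}$ with $\nu\circ\widetilde{\phi}=\phi$. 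Composing with the inclusion $\widetilde{\Gamma}\subseteq\mathbb{P}^1$ gives a rational map $\mathbb{P}^1\dashrightarrow\mathbb{P}^1$ which, the source being a smooth curve and the target projective, extends to a genuine morphism $\overline{\phi}:\mathbb{P}^1\to\mathbb{P}^1$.

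I would then exploit properness: the image $\overline{\phi}(\mathbb{P}^1)$ is closed and, by dominance, dense, hence all of $\mathbb{P}^1$. On the other hand $\overline{\phi}(\mathbb{P}^1)=\widetilde{\phi}(K)\cup\{\overline{\phi}(\infty)\}$, and $\widetilde{\phi}(K)$ avoids the removed points $p_1,\dots,p_k$; surjectivity therefore forces $\{p_1,\dots,p_k\}\subseteq\{\overline{\phi}(\infty)\}$, so $k=1$ and $\widetilde{\Gamma}\cong\mathbb{P}^1\setminus\{p_1\}\cong K$. Under this identification the normalization map $\nu$ becomes the desired regular birational map $K\to\Gamma$. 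The main obstacle is precisely this last point: a priori the normalization could be $\mathbb{P}^1$ minus several points, which admits no regular birational map from $K$, so one must use the properness of $\mathbb{P}^1$ together with the existence of the given parametrization to confine all the missing points to the single image of $\infty$, thereby pinning down $\widetilde{\Gamma}\cong K$.
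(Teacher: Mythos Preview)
Your argument is correct. The paper itself does not supply a proof of this proposition: it is stated as a recalled fact with a reference to \cite{jel?}, so there is no ``paper's own proof'' to compare against. Your route via L\"uroth's theorem, normalization, and the extension $\overline{\phi}:\mathbb{P}^1\to\mathbb{P}^1$ is the standard one, and the key step---using surjectivity of $\overline{\phi}$ to force all deleted points $p_1,\dots,p_k$ into the single image $\overline{\phi}(\infty)$, whence $k=1$ and $\widetilde{\Gamma}\cong K$---is carried out cleanly.
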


\begin{defi} Let $\Gamma$ be an affine curve which has the property 1)
(or 2)) from the above proposition. Then $\Gamma$ will be called
\emph{an affine polynomial curve} and the mapping $\phi$ will be
called \emph{a parametrization of $\Gamma$}. A family $\cal F$ of
affine polynomial curves on $X$ is called \emph{bounded} if there
exists an embedding $i: X\subset K^N$ and a natural number $D$
such that every curve $\Gamma\in {\cal F}$ has degree less then or
equal to $D$ in $K^N.$
\end{defi}

\begin{re}
It is easy to see that the definition of bounded family does
not depend on an embedding $i: X\to K^N.$
\end{re}

Now we give the definition of a {\it $K$-uniruled} variety. We
have introduced this notion for uncountable fields in \cite{jel?}.
However, here we work over any field and we need a refined version
of the definition (it coincides with the older one for uncountable
fields).

\begin{pr}\label{family}
Let $X\subset K^N$ be an irreducible affine variety of dimension
$\geq 1.$ The following conditions  are equivalent:

1) there is a bounded family $\cal F$ of affine polynomial curves,
such that for every point $x\in X$ there is a curve $l_x\in \cal
F$ going through x;

2) there is an open, non-empty subset $U\subset X$ and a bounded
family $\cal F$ of affine polynomial curves, such that for every
point $x\in U$ there is a curve $l_x\in \cal F$  going through x;

3) there exists an affine variety $W$ with {\rm dim} $W = {\rm
dim}\ X-1$ and a  dominant polynomial mapping $\phi: W\times
K\rightarrow X.$
\end{pr}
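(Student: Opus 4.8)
The plan is to establish the cycle $1)\Rightarrow 2)\Rightarrow 3)\Rightarrow 1)$. The implication $1)\Rightarrow 2)$ is immediate: take $U=X$.

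For $2)\Rightarrow 3)$ I would pass from the abstract family $\mathcal F$ to a parameter variety of parametrizations. Since $\mathcal F$ is bounded, fix the embedding $X\subset K^N$ and $D$ so that every $\Gamma\in\mathcal F$ has degree $\le D$; then each $\Gamma$ admits a parametrization $\psi_\Gamma:K\to\Gamma$ whose coordinates are polynomials of degree $\le D$. Encoding such a map by its coefficient vector identifies the set
\[
M=\{\psi:K\to K^N \ :\ \deg\psi_i\le D,\ \psi(K)\subseteq X\}
\]
with a closed subvariety of $K^{N(D+1)}$, since for each defining equation $f$ of $X$ the requirement $f(\psi(s))\equiv 0$ in $s$ is a finite system of polynomial equations in the coefficients. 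The evaluation map $\Phi:M\times K\to X$, $\Phi(\psi,s)=\psi(s)$, is polynomial and its image contains $U$, hence it is dominant; choosing an irreducible component of $M$ over which $\Phi$ stays dominant gives $M_0$ with $\dim M_0\ge\dim X-1$. I would then cut $M_0$ down by general hyperplanes of the ambient affine space, one at a time: as long as the source has dimension strictly larger than $\dim X$ the general fibre of $\Phi$ is positive dimensional, so a general hyperplane section of the source still maps dominantly onto $X$. Stopping when the base reaches dimension $\dim X-1$ produces an affine variety $W$ with $\dim W=\dim X-1$ and a dominant polynomial map $\phi=\Phi|_{W\times K}:W\times K\to X$, which is exactly $3)$.

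For $3)\Rightarrow 1)$ I would first read off a bounded family. For $w\in W$ set $\phi_w(t)=\phi(w,t)$; this is a polynomial map $K\to X$ whose degree is bounded by $\deg\phi$ uniformly in $w$, so the non-constant closures $\Gamma_w=\overline{\phi_w(K)}$ form a bounded family $\mathcal F$. As $\phi$ is dominant, $\bigcup_w\Gamma_w$ contains a dense open $U\subseteq X$, which already yields $2)$. The remaining and genuinely delicate point is to produce a curve of a bounded family through each point of the complement $X\setminus U$.

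To cover these points I would use a limiting argument organized by properness. After normalizing parametrizations by the affine reparametrizations $s\mapsto as+b$ of $K$ and marking the base point $\psi(0)$, the degree-$\le D$ parametrizations with image in $X$ and prescribed base point are parametrized by a variety whose ``shape'' directions can be compactified to a projective space $\bar P$; forming the incidence variety $\mathcal I\subseteq\bar P\times X$ as the closure of the genuine incidence, the projection $\mathcal I\to X$ is proper because $\bar P$ is projective, and dominant because its image contains $U$, hence surjective. Thus every $x_0\in X$ lies on some limit curve. The main obstacle is precisely here: I must guarantee that this limit is still a polynomial curve of degree $\le D$ through $x_0$, and in particular does not degenerate to a constant map nor to a rational curve with several points at infinity. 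I expect to control this by carrying out the limit inside the space of morphisms $\mathbb A^1\to X$ of degree $\le D$ (a limit of such morphisms is again such a morphism) and by using the rescaling $s\mapsto\lambda s$ to renormalize the parametrization so that its leading coefficients stay bounded and nonzero, which prevents collapse to a point. Once non-degeneration is secured the limit curve lies in $X$, has degree $\le D$, passes through $x_0$, and belongs to a bounded family, giving $1)$.
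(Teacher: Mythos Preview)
Your cycle and the paper's are organized slightly differently---the paper does $1)\Rightarrow 2)$ (obvious), $2)\Rightarrow 3)$ by citing \cite{stasica}, $3)\Rightarrow 2)$ (obvious), and then spends its effort on $2)\Rightarrow 1)$---but the substantive step is the same in both: a properness argument that upgrades ``curves through a dense open'' to ``curves through every point''. Your $2)\Rightarrow 3)$ is a direct argument where the paper simply invokes a reference; the hyperplane-cutting is standard once you throw the constant parametrizations out of $M$ (otherwise the general fibre of $\Phi$ could sit entirely over a single point of $M_0$ and a hyperplane in $M_0$ need not meet it).

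Where your write-up is looser than the paper is the compactification in $3)\Rightarrow 1)$. After fixing $l_x(0)=x$, the reparametrization $t\mapsto\lambda t$ acts on the remaining coefficients by $a^{j,i}\mapsto\lambda^{i}a^{j,i}$, so the correct quotient is the \emph{weighted} projective space $\Delta$ with weights $1,2,\dots,D$ repeated $n$ times, not an ordinary projective space. The incidence conditions $f_k(l_x(t))\equiv 0$ are weighted-homogeneous in the $a^{j,i}$, hence cut out a closed $W\subset X\times\Delta$; since $\Delta$ is projective the projection to $X$ is proper, its image is closed, contains $U$, and therefore equals $X$. This single device dispatches both of your stated worries at once: a point of $\Delta$ is by construction a nonzero coefficient vector modulo rescaling, so the limit parametrization is never constant; and because you are compactifying parametrizations $K\to K^N$ (not their images), every limit still encodes a polynomial map, so degeneration to a rational curve with two places at infinity simply cannot occur. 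With an ordinary $\bar P$ the incidence equations are not homogeneous and the closedness argument does not go through, so ``weighted'' is the one word your sketch is missing.
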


\begin{proof} 1) $\Rightarrow$ 2) is obvious. 2) $\Rightarrow$ 3)
follows from \cite{stasica}.  3) $\Rightarrow$ 2) is obvious. We
prove 2) $\Rightarrow$ 1). Assume that $X\subset K^n.$
Every  curve
$l_x\in \cal F$ is given by $n$ polynomials of one variable:
$$l_x(t)=(x_1+\sum_{i=1}^D a^{1,i} t^i,...,x_n+\sum_{i=1}^D a^{n,i}
t^i).$$ Let $\Delta$ denote a $nD-1$-dimensional weighted
projective space with weights $1,2,...,D,$ $...,1,2,...,D.$ Hence
we can associate $l_x$ with one point
$$(x_1,...,x_n; a^{1,1},...,a^{1,D};a^{2,1},...,a^{2,D};...;a^{n,1},...,a^{n,D})\in
X\times \Delta.$$ Let $\{f_i=0, \ i=1,...,m\}$ ($f_i\in
K[x_1,...,x_n]$) be equations of the variety $X.$ The condition
$l_x\subset X$ is equivalent to conditions $f_i(l_x(t))=0, \ i
=1,..., m.$ The last equations are in fact equivalent to a finite
number of polynomial equations
$$h_\alpha(x_1,..., x_n; a^{1,1},...,a^{1,D};a^{2,1},...,a^{2,D};...;a^{n,1},...,a^{n,D})=0,$$
which are weighted homogeneous with respect to
$a^{1,1},...,a^{1,D};a^{2,1},...;a^{n,1},...,a^{n,D}.$
Let $W\subset X\times \Delta$ be a variety described by
polynomials $h_\alpha$ and let $\pi : X\times \Delta\to X$ be the
projection. The mapping $\pi$ is proper, in particular the set
$\pi(W)$ is closed. Since $\pi(W)$  contains the dense subset $U$,
we have $\pi(W)=X.$
\end{proof}

Now we can state:

\begin{defi}
An affine irreducible variety $X$ is called \emph{$K$-uniruled}
if it is of dimension $\geq 1$, and satisfies one of
equivalent conditions $1)-3)$ listed in Proposition \ref{family}.
\end{defi}

If the field $K$ is uncountable we have stronger result (see
\cite{stasica}):

\begin{pr}\label{stasica}
Let $K$ be an uncountable field. Let $X$ be an irreducible affine
variety of dimension $\geq 1.$ The following conditions  are
equivalent:

1) $X$ is $K$-uniruled;

2) for every point $x\in X$ there is a polynomial affine curve in
$X$ going through x;

3) there exists a Zariski-open, non-empty subset $U$ of $X$, such
that for every point $x\in U$ there is a polynomial affine curve
in $X$ going through x;

4) there exists an affine variety $W$ with {\rm dim} $W = {\rm
dim}\ X-1$ and a  dominant polynomial mapping $\phi: W\times
K\rightarrow X.$
\end{pr}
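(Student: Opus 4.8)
The plan is to establish the cycle $1)\Rightarrow 2)\Rightarrow 3)\Rightarrow 1)$ together with the equivalence $1)\Leftrightarrow 4)$; among these only the implication $3)\Rightarrow 1)$ carries content, and it is precisely there that the uncountability of $K$ is used. The equivalence $1)\Leftrightarrow 4)$ is immediate, since condition $4)$ is word for word condition $3)$ of Proposition \ref{family}, and by the definition of a $K$-uniruled variety a variety is $K$-uniruled exactly when it has dimension $\geq 1$ and satisfies the mutually equivalent conditions of that proposition. The implication $1)\Rightarrow 2)$ is equally immediate: a $K$-uniruled variety satisfies condition $1)$ of Proposition \ref{family}, so there is a family of polynomial curves passing through every point, and in particular one such curve through each $x$. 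Finally $2)\Rightarrow 3)$ is trivial, with $U=X$.

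For the main implication $3)\Rightarrow 1)$ I would stratify the curves by degree. Fix an embedding $X\subset K^n$. For each $D\in\N$ I repeat the construction from the proof of Proposition \ref{family}: I form the incidence variety $W_D\subset X\times\Delta_D$ consisting of those pairs $(x,l)$ for which $l(t)=(x_1+\sum_{i=1}^D a^{1,i}t^i,\dots,x_n+\sum_{i=1}^D a^{n,i}t^i)$ satisfies $l(K)\subset X$, where $\Delta_D$ is the weighted projective space with weights $1,\dots,D,\dots,1,\dots,D$. As there, the condition $l(K)\subset X$ is expressed by weighted-homogeneous equations $h_\alpha(x,a)=0$, so $W_D$ is well defined, and the projection $\pi_D\colon X\times\Delta_D\to X$ is proper. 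Hence each image $Z_D:=\pi_D(W_D)$ is a closed subset of $X$, and plainly $Z_1\subseteq Z_2\subseteq\cdots$. Note that $x\in Z_D$ means exactly that there is a non-constant polynomial map $l\colon K\to X$ admitting a parametrization of degree $\leq D$ with $l(0)=x$, i.e. a polynomial affine curve through $x$ of controlled complexity.

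By hypothesis $3)$, through each point of the dense open set $U$ there passes a polynomial affine curve; taking its parametrization centred so that its value at $0$ is that point, we obtain $U\subseteq\bigcup_{D\in\N}Z_D$. Now I invoke the key consequence of uncountability: an irreducible variety over an uncountable field is never a union of countably many proper closed subvarieties. Writing $X=(X\setminus U)\cup\bigcup_{D}Z_D$ exhibits $X$ as a countable union of closed sets in which $X\setminus U$ is proper (because $U\neq\emptyset$); were every $Z_D$ also proper, this would contradict the cited principle. Therefore $Z_{D_0}=X$ for some $D_0$, which says that through every point of $X$ there passes a polynomial curve admitting a parametrization of degree $\leq D_0$. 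These curves constitute a bounded family in the sense of the definition, so $X$ satisfies condition $1)$ of Proposition \ref{family} and is $K$-uniruled, finishing the cycle.

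I expect the real obstacle to lie in two places. First, the honest verification that membership $l(K)\subset X$ is cut out by weighted-homogeneous equations and that the resulting $\pi_D$ is proper, so that $Z_D$ is genuinely closed; this is a faithful re-run of the argument for Proposition \ref{family} and should present no new difficulty, but it must be carried out. Second, and more essentially, the correct formulation and use of the uncountability principle: one must be sure that ``polynomial curve through $x$'' always yields a point $x=l(0)$ lying in the image of a parametrization of bounded degree, so that the covering $U\subseteq\bigcup_D Z_D$ really holds, and one must justify that an irreducible $K$-variety over an uncountable $K$ is not a countable union of proper subvarieties (for instance by slicing with a general line, on which such a union would become a countable union of finite sets inside the uncountable set $K$).
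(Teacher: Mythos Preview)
Your argument is correct, but there is nothing in the paper to compare it with: the authors do not prove this proposition at all, they simply state it with the parenthetical reference ``(see \cite{stasica})'' and move on. What you have written is in fact the standard argument behind that cited result, and it dovetails nicely with the paper's own proof of Proposition~\ref{family}: you reuse the incidence construction $W_D\subset X\times\Delta_D$ and the properness of the projection to get closed loci $Z_D\subset X$, then replace the single bounded family of Proposition~\ref{family} by the ascending union $\bigcup_D Z_D$ and invoke the uncountability principle to force $Z_{D_0}=X$ for some $D_0$.

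One point you should make explicit when you write this up carefully is why $U\subseteq\bigcup_D Z_D$ actually holds: you need that if $x$ lies on a polynomial affine curve $\Gamma\subset X$, then some parametrization $\phi\colon K\to\Gamma$ hits $x$. This is true because the birational regular map $K\to\Gamma$ of Proposition~2.1 factors through the normalization $\tilde\Gamma$, and a birational regular map between smooth affine rational curves forces $K\xrightarrow{\ \sim\ }\tilde\Gamma$; since the normalization $\tilde\Gamma\to\Gamma$ is surjective, so is $\phi$, and after translating the parameter you get $\phi(0)=x$. With that detail in place your proof is complete.
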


Let $X$ be a smooth projective surface and let $D=\sum_{i=1}^n
D_i$ be a simple normal crossing (s.n.c) divisor on $X$ (here we
consider only reduced divisors). Let $graph(D)$ be a graph of $D$,
i.e., a graph with one vertex $Q_i$ for each irreducible component
$D_i$  of $D$, and one edge between $Q_i$ and $Q_j$ for each point
of intersection of $D_i$ and $D_j$.

\begin{defi}
Let $D$ be a simple normal crossing divisor on a smooth surface
$X.$ We say that $D$ is a \emph{tree} if $graph(D)$ is connected and
acyclic.
\end{defi}

We have the following fact which is obvious from graph theory:

\begin{pr}\label{acykl}
Let $X$ be a smooth projective surface and let  divisor $D\subset
X$ be a tree. Assume that $D', D''\subset D$ are connected
divisors without common components. Then  $D'$ and  $D''$ have at
most one common point.
\end{pr}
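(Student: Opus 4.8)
The plan is to reduce the geometric statement to a purely combinatorial fact about the tree $graph(D)$ and then argue by contradiction. First I would observe that, since $D$ is simple normal crossing, any point lying on two distinct components of $D$ is an ordinary node at which exactly two components meet transversally, and distinct such points correspond to distinct edges of $graph(D)$. Because $D'$ and $D''$ share no common component, every common point $p$ of $D'$ and $D''$ lies on one component $A$ of $D'$ and on a \emph{different} component $B$ of $D''$; hence $p$ is such a node, and it determines the edge of $graph(D)$ joining the vertex $A$ to the vertex $B$. Writing $V'$ and $V''$ for the (disjoint) sets of vertices of $graph(D)$ corresponding to the components of $D'$ and $D''$, I would thus identify the common points of $D'$ and $D''$ with the edges of $graph(D)$ having one endpoint in $V'$ and the other in $V''$.

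Next I would use the hypothesis that $D'$ and $D''$ are connected: this says exactly that the subgraphs induced on $V'$ and on $V''$ are connected, i.e. they are subtrees of the tree $graph(D)$. The remaining claim is then the elementary graph-theoretic fact that in a tree there is at most one edge joining two disjoint subtrees. To prove it, suppose for contradiction that $D'$ and $D''$ had two distinct common points $p_1 \neq p_2$, giving two distinct edges $e_1$ (joining $A_1 \in V'$ to $B_1 \in V''$) and $e_2$ (joining $A_2 \in V'$ to $B_2 \in V''$). Since $V'$ is connected there is a simple path $P'$ inside $V'$ from $A_1$ to $A_2$, and since $V''$ is connected there is a simple path $P''$ inside $V''$ from $B_1$ to $B_2$. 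Concatenating $e_1$, $P''$, $e_2$, and $P'$ produces a closed walk in $graph(D)$; because $V'$ and $V''$ are disjoint and $e_1, e_2$ are the only edges crossing between them, this closed walk uses two distinct crossing edges and has no improperly repeated vertex, so it contains a genuine cycle. This contradicts the acyclicity of $graph(D)$.

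The only subtlety, which I expect to be the main (though minor) obstacle, is the degenerate case in which one of the paths $P'$, $P''$ is trivial. If $A_1 = A_2$ and $B_1 = B_2$ simultaneously, then $e_1$ and $e_2$ would be two distinct edges joining the same pair of vertices, already forming a cycle of length two forbidden in a tree. If exactly one of the paths is trivial, the concatenation still closes up into an honest cycle through the single nontrivial path and the two crossing edges. In every case a cycle appears, so the assumption of two distinct common points is untenable, and $D'$ and $D''$ meet in at most one point.
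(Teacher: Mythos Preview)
Your argument is correct and is precisely the graph-theoretic verification the paper has in mind: the paper gives no proof at all, stating only that the proposition ``is obvious from graph theory.'' Your reduction of common points to crossing edges between two disjoint subtrees of $graph(D)$, followed by the cycle-building contradiction, is exactly the intended (and omitted) justification.
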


\begin{defi}
Let $X, Y$ be affine varieties and $f : X \rightarrow Y$ be a
regular  mapping. We say that $f$ \emph{is finite} at a point $y
\in Y$ if there exists an open neighborhood $U$ of  $y$ such that
$ {\rm res}_{f^{-1}(U)} f :f^{-1}  (U)\rightarrow U$ is a finite
map.
\end{defi}

It is well-known that if $f$ is a generically finite, then the set
of points at which $f$ is not finite is either empty or it is a
hypersurface in $\overline{f(X)}$ (for details see \cite{jel?},
\cite{jel2}). We denote this set by $S_f.$ Now we can formulate
the following useful:

\begin{theo}\label{gl1}
Let $\Gamma$ be an affine curve. Let $\phi : \Gamma\times K\to
K^N$ be a generically finite mapping. Then the set $S_{\phi}$ is a
union of finitely many (possibly empty) of affine polynomial
curves.
\end{theo}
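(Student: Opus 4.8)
The plan is to realize $S_\phi$ as the image of the boundary of a suitable compactification of $\Gamma\times K$ and to exploit the tree structure of that boundary. First I would record the easy reduction: since $\dim(\Gamma\times K)=2$ and $\phi$ is generically finite, the closure $Y:=\overline{\phi(\Gamma\times K)}$ is an irreducible affine surface, and by the cited description of the non-finiteness locus (\cite{jel?},\cite{jel2}) the set $S_\phi$ is a hypersurface in $Y$, hence a pure one-dimensional set, i.e. a finite union of irreducible affine curves. It therefore suffices to prove that each such component is an affine polynomial curve (the empty case being allowed).

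Next I would compactify. Let $\overline{\Gamma}$ be a smooth projective model of $\Gamma$ and embed $X:=\Gamma\times K$ into the smooth projective surface $\overline{\Gamma}\times\field{P}^1$, with $K\subset\field{P}^1$. The boundary $D:=(\overline{\Gamma}\times\field{P}^1)\setminus X$ consists of the horizontal component $H:=\overline{\Gamma}\times\{\infty\}$ together with the finitely many vertical fibres $V_p:=\{p\}\times\field{P}^1$, $p\in\overline{\Gamma}\setminus\Gamma$; since distinct $V_p$ are disjoint and each meets $H$ in a single point, $graph(D)$ is a star, so $D$ is a \emph{tree}. The map $\phi$ extends to a rational map $\Phi:\overline{\Gamma}\times\field{P}^1\dashrightarrow\field{P}^N$ whose locus of indeterminacy lies on $D$; resolving it by blowing up points of the boundary I obtain a smooth projective surface $\sigma:Z\to\overline{\Gamma}\times\field{P}^1$ and a morphism $\tilde\Phi:Z\to\field{P}^N$. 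Carrying out the resolution as an embedded resolution keeps the boundary $D_Z:=Z\setminus X$ a simple normal crossing tree, and every component of $D_Z$ is a smooth rational curve, with the sole possible exception of the strict transform $\tilde H$ of $H$. The standard description of the non-properness locus then gives $S_\phi=\tilde\Phi(D_Z)\cap K^N$ (as in \cite{jel?},\cite{jel2}).

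The geometric heart of the argument is the infinity divisor $E_\infty:=\tilde\Phi^{-1}(H_\infty)$, where $H_\infty:=\field{P}^N\setminus K^N$; since $\phi$ maps $X$ into $K^N$ one has $\mathrm{Supp}(E_\infty)\subseteq D_Z$. Two facts about $E_\infty$ are needed. First, $\tilde H\subseteq E_\infty$: because $\phi$ is generically finite it cannot factor through the projection to $\Gamma$, so for generic $p$ the map $t\mapsto\phi(p,t)$ is a non-constant polynomial map $K\to K^N$ and hence tends to infinity as $t\to\infty$; thus the generic point of $H$ is sent into $H_\infty$, forcing all of $\tilde H$ into $E_\infty$. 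This is what removes the only possibly non-rational boundary curve from consideration. Second, and this is the step I expect to be the main obstacle, $\mathrm{Supp}(E_\infty)$ is \emph{connected}: passing to the Stein factorization $Z\to Z'\to\field{P}^N$, with $Z\to Z'$ having connected fibres and $Z'$ a normal projective surface, reduces this to the connectedness of the preimage on $Z'$ of the ample divisor $H_\infty\cap Y$, which follows from the Enriques--Severi--Zariski lemma that an effective ample divisor on a normal projective surface is connected --- a statement valid in arbitrary characteristic, as required here.

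Finally I would argue component by component. Let $C$ be a component of $D_Z$ contributing a curve to $S_\phi$, i.e. with $C\not\subseteq E_\infty$ and $\tilde\Phi|_C$ non-constant; by the previous paragraph $C\neq\tilde H$, so $C\cong\field{P}^1$. Applying Proposition \ref{acykl} to the tree $D_Z$ with $D'=C$ and $D''=\mathrm{Supp}(E_\infty)$ (connected, and sharing no component with $C$) shows that $C$ meets $E_\infty$ in at most one point; it must be exactly one, since otherwise $\tilde\Phi(C)$ would be a complete curve inside $K^N$, hence a point. Consequently $C\setminus E_\infty\cong K$, and $\tilde\Phi$ restricts to a dominant polynomial map $K\to R$, where $R:=\overline{\tilde\Phi(C\setminus E_\infty)}\subset K^N$ is the corresponding component of $S_\phi$. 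By the characterization of affine polynomial curves recalled at the beginning of the preliminaries, $R$ is an affine polynomial curve. Since $D_Z$ has only finitely many components, $S_\phi$ is a finite union of affine polynomial curves, as claimed.
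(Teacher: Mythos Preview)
Your argument is essentially the paper's own: compactify $\Gamma\times K$ inside $\overline{\Gamma}\times\Bbb P^1$ so that the boundary is a tree, resolve the indeterminacy, observe that the strict transform of the horizontal component lands in $H_\infty$, use connectedness of $\tilde\Phi^{-1}(H_\infty)$ together with Proposition~\ref{acykl} to see that each remaining boundary curve meets it in a single point, and conclude. The only minor discrepancies are that the paper first normalizes $\Gamma$ (needed for your embedding $\Gamma\times K\hookrightarrow\overline{\Gamma}\times\Bbb P^1$ to make sense when $\Gamma$ is singular) and obtains the connectedness of the infinity locus by citing \cite{jel?}, Lemma~4.5, rather than via Stein factorization and the ample-divisor argument you give.
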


\begin{proof}
Taking a normalization we can assume that the curve $\Gamma$ is
smooth (note that a normalization is a finite mapping). Let
$\overline{\Gamma}$ be a smooth completion of $\Gamma$ and denote
$\overline{\Gamma}\setminus\Gamma=\{ a_1,..., a_l\}.$ Let $X=
\Gamma\times K$ and $\overline{X}= \overline{\Gamma}\times \Bbb
P^1$ be a projective closure of $X.$ The divisor
$D=\overline{X}\setminus X=\overline{\Gamma}\times \infty +
\sum_{i=1}^l \{a_i\}\times \Bbb P^1$ is a tree. Now we can resolve
points of indeterminacy of the mapping $\phi$:
\vspace{3mm}
\begin{center}\begin{picture}(240,160)(-40,40)
\put(-20,117.5){\makebox(0,0)[l]{$\pi\left\{\rule{0mm}{2.7cm}\right.$}}
\put(0,205){\makebox(0,0)[tl]{$\overline{X}_m$}}
\put(0,153){\makebox(0,0)[tl]{$\overline{X}_{m-1}$}}
\put(4,105){\makebox(0,0)[tl]{$\vdots$}}
\put(0,40){\makebox(0,0)[tl]{$\overline{X}$}}
\put(170,40){\makebox(0,0)[tl]{$\Bbb P^N$}}
\put(80,50){\makebox(0,0)[tl]{$\phi$}}
\put(100,140){\makebox(0,0)[tl]{$\phi '$}}
\put(10,70){\makebox(0,0)[tl]{$\pi_1$}}
\put(10,130){\makebox(0,0)[tl]{$\pi_{m-1}$}}
\put(10,180){\makebox(0,0)[tl]{$\pi_m$}}
\put(5,190){\vector(0,-1){30}} \put(5,140){\vector(0,-1){30}}
\put(5,80){\vector(0,-1){33}}
\multiput(20,35)(8,0){17}{\line(1,0){5}}
\put(157,35){\vector(1,0){10}} \put(20,200){\vector(1,-1){150}}
\end{picture}
\end{center}
\vspace{3mm} Note that  the divisor $D'=\pi^*(D)$ is a tree. Let
$\overline{\Gamma}\times\infty'$ denote a proper transform of
$\overline{\Gamma}\times\infty.$ It is an easy observation that
$\phi'(\overline{\Gamma}\times\infty')\subset H_\infty$, where
$H_\infty$ denotes the hyperplane at infinity of $\Bbb P^N.$ Now
$S_\phi= \phi'(D'\setminus \phi'^{-1}(H_\infty)).$ The curve
$L=\phi'^{-1}(H_\infty)$ is a complement of a semi-affine variety
$\phi'^{-1}(K^N)$ hence it is connected (for details see
\cite{jel?}, Lemma~4.5). Now by Proposition \ref{acykl} we have
that every irreducible curve $Z\subset D'$ which does not belong
to $L$ has at most one common point with $L$. Let $S\subset
S_\phi$ be an irreducible component. Hence $S$ is a curve. There
is a curve $Z\subset D',$ which has exactly one common point with
$L$ such that $S=\phi'(Z\setminus L)=\phi'(K)$. This completes the
proof.
\end{proof}

\section{Main Result.}
The aim of this section is to prove the following:

\begin{theo}\label{glowne}
Let $G$ be a non-trivial connected unipotent  group which acts
effectively on an affine variety $X.$ Then every non-empty
component $R$ of the set of fixed points of $G$ is a $K$-uniruled
variety.
\end{theo}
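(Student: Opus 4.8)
The plan is to verify condition 2) of Proposition~\ref{family}: it suffices to exhibit a bounded family $\mathcal{F}$ of affine polynomial curves such that through every point of some dense open subset $U\subset R$ there passes a curve of $\mathcal{F}$ that is contained in $R$. First note that $R$ is eligible for the notion of uniruledness: by the theorem of Bia\l ynicki-Birula quoted in the Introduction, $G$ has no isolated fixed points, so $\dim R\geq 1$.

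For the setup I would fix a $G$-equivariant closed embedding $X\subset K^N$ on which $G$ acts by unipotent linear maps (such an embedding exists for an affine $G$-variety), and choose inside $G$ a central one-parameter subgroup $H$ isomorphic to the additive group $(K,+)$; its fixed subspace is a linear subspace $L\subset K^N$, so that $\mathrm{Fix}(G)\subset\mathrm{Fix}(H)=X\cap L$. I would treat $G=H\cong(K,+)$ as the principal case and recover the general statement by induction on $\dim G$, using that $H$ is normal in $G$, that $G/H$ acts on $\mathrm{Fix}(H)$, and that $\mathrm{Fix}(G)$ is the fixed locus of $G/H$ on $\mathrm{Fix}(H)$. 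The bookkeeping needed to preserve effectivity at each inductive step is routine but must be carried out.

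The construction of curves in $R$ is where Theorem~\ref{gl1} enters. For a general $x\in R$ I would select a suitable affine curve $\Gamma\subset X$ and build from the action a generically finite mapping $\phi:\Gamma\times K\to K^N$ whose non-finiteness locus $S_\phi$ passes through $x$; concretely one passes to a $G$-equivariant projective completion $\overline{X}$, resolves the indeterminacy of the induced rational map exactly as in the proof of Theorem~\ref{gl1}, and reads off the curves contracted over the fixed locus. By Theorem~\ref{gl1} each component of $S_\phi$ is then an affine polynomial curve, which yields a polynomial curve $l_x$ through $x$. As $x$ ranges over $U$, the curves $l_x$ should have degree bounded in terms of $\deg\Gamma$ and of the unipotent action, so that $\{l_x\}$ is a bounded family and Proposition~\ref{family} gives that $R$ is $K$-uniruled.

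The hard part is to guarantee two things simultaneously. First, that the polynomial curve $l_x$ actually lies inside $R$, i.e.\ consists of points fixed by the \emph{whole} group $G$ and not merely by the auxiliary subgroup $H$, whose fixed locus $\mathrm{Fix}(H)=X\cap L$ is a priori larger; by the Kostant--Rosenlicht theorem the orbits of the unipotent group $H$ are closed, so fixed points are never approached by honest orbits, and the uniruling curves must be extracted from the behaviour at infinity of the resolved action rather than from orbits themselves. Second, that these curves sweep out a dense subset of $R$ within a bounded family, which forces the curve $\Gamma$ and the auxiliary completion to vary algebraically with $x$. Controlling the fixed-point stratum inside the resolution and matching it, through the inductive reduction, with $\mathrm{Fix}(G)$ rather than $\mathrm{Fix}(H)$ is where the genuine difficulty lies.
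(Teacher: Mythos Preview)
Your outline follows the same architecture as the paper's proof --- reduce to $G\cong G_a$, pick an auxiliary curve $\Gamma$ through a fixed point $a$, form the action map $\Phi:\Gamma\times K\to X$, $(x,g)\mapsto g(x)$, and extract polynomial curves from $S_\Phi$ via Theorem~\ref{gl1} --- but you stop precisely where the real content is, and you also make the boundedness problem harder than necessary.

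The step you flag as ``the hard part'' and leave unresolved is exactly what the paper dispatches cleanly. Once you have reduced to $G=G_a$ there is no distinction between $\mathrm{Fix}(G)$ and $\mathrm{Fix}(H)$, so your worry about the curve lying only in the fixed locus of a subgroup is a red herring. The missing idea is this: the map $\Phi$ is $G$-equivariant for the $G$-action on $\Gamma\times G$ by translation on the second factor, and therefore the non-finiteness set $S_\Phi$ is $G$-invariant. Since $G$ is connected, each irreducible component $S_i$ of $S_\Phi$ is itself $G$-invariant. Now $a\in S_\Phi$ because the fibre over $a$ is all of $\{a\}\times G$; say $a\in S_1$. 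If some $x\in S_1$ were not fixed, its orbit $G\cdot x$ would be a one-dimensional subset of the irreducible curve $S_1$, hence $S_1=G\cdot x$, forcing $a$ into a nontrivial orbit --- contradiction. Thus $S_1\subset\mathrm{Fix}(G)$, and $S_1$ is the desired polynomial curve through $a$. None of this requires equivariant completions, Kostant--Rosenlicht, or analysis of contracted loci in a resolution.

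Your plan to verify condition~2) of Proposition~\ref{family} directly also creates an unnecessary obstacle: you would need a uniform degree bound on the curves $l_x$ as $x$ varies, but your auxiliary curve $\Gamma$ depends on $x$ and you give no mechanism to bound $\deg\Gamma$. The paper avoids this entirely by first assuming $K$ uncountable and invoking Proposition~\ref{stasica}, where a single polynomial curve through each point suffices with no boundedness hypothesis; the countable case is then handled by base change to an uncountable extension and an elementary descent of the resulting equations back to $K$.
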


\begin{proof} First of all let us recall that a connected unipotent group
has a normal series $$0=G_0\subset G_1\subset ...\subset G_r=G,$$
where $G_i/G_{i-1}\cong G_a= (K,+,0).$ By induction on dim $G$ we
can easily reduce the general case to that of $G=G_a.$

First assume that the field $K$ is uncountable. Take a point $a
\in R.$ By Proposition \ref{stasica} it is enough to prove, that
there is an affine polynomial curve $S\subset Fix(G)$ through $a.$
Let $L$ be an irreducible curve in $X$ going through $a,$ which is
not contained in any orbit of $G$ and it is not contained in
$Fix(G).$ Consider a surface $Y=L\times G.$ There is natural $G$
action on $Y$ :  for $h\in G$ and $y=(l,g)\in Y$ we put
$h(y)=(l,hg)\in Y.$ Take a mapping
$$\Phi : L\times G\ni (x, g)\to g(x)\in X.$$ It is a generically-finite
polynomial mapping. Observe that it is $G$-invariant i.e.,
$\Phi(gy)=g\Phi(y).$ This implies that the set $S_\Phi$ of points
at which the mapping $\Phi$ is not finite is $G$-invariant.
Indeed, it is enough to show that the complement of this set is
$G$-invariant. Let $\Phi$ be finite at $x\in X.$ This means that
there is an open neighborhood $U$ of $x$ such that the mapping
$\Phi : \Phi^{-1}(U)\to U$ is finite. Now we have the following
diagram:
\begin{center}
\begin{picture}(280,130)(-40,40)
\put(140,160){\makebox(0,0)[tl]{$\Phi^{-1}(gU)=g\Phi^{-1}(U)$}}
\put(20,160){\makebox(0,0)[tl]{$\Phi^{-1}(U)$}}
\put(30,40){\makebox(0,0)[tl]{$U$}}
\put(180,40){\makebox(0,0)[tl]{$gU$}}
\put(190,100){\makebox(0,0)[tl]{$\Phi$}}
\put(40,100){\makebox(0,0)[tl]{$\Phi$}}
\put(95,50){\makebox(0,0)[tl]{$g$}}
\put(95,170){\makebox(0,0)[tl]{$g$}}
\put(33,145){\vector(0,-1){100}} \put(45,35){\vector(1,0){125}}
\put(60,155){\vector(1,0){75}} \put(183,145){\vector(0,-1){100}}
\end{picture}
\end{center}
\vspace{3mm}

\noindent This diagram shows that the mapping $\Phi$ is finite
over $gU$ if it is finite over $U.$ In particular this implies
that the set $S_\Phi$ is $G$-invariant. Let  $S_\Phi=S_1\cup
S_2...\cup S_k$ be a decomposition of $S_\Phi$ in (irreducible)
affine polynomial curves (see Theorem \ref{gl1}). Since the set
$S_\Phi$ is $G$-invariant, we have that each curve $S_i$ is also
$G$-invariant. Note that the point $a$ belongs to $S_\Phi$,
because the fiber over $a$ has infinite number of points. We can
assume that $a\in S_1.$ Let $x\in S_1,$ we want to show that $x\in
Fix(G).$ Indeed, otherwise $G.x=S_1$ and $a$ would be in the orbit
of $x$ - a contradiction. Hence $S_1\subset Fix(G)$ and we
conclude our result by Theorem \ref{gl1}.

Now assume that the field $K$ is countable. Let $X\subset K^n.$
Let $T$ be uncountable algebraically closed extension of $K.$ By
the base change the group $G$ acts on $\overline{X}\subset T^n.$
Moreover, the variety $\overline{R}\subset T^n$ is a component of
the set of fixed points of $G$ (because the set $R$ is dense in
$\overline{R}$). By the first part of our proof the variety
$\overline{R}$ is $T$-uniruled. In particular there exists a
number $D$  such that for every point $x\in \overline{R}$ there is
a polynomial affine curve $l_x\subset \overline{R}\subset T^n,$ of
degree at most $D,$ going through $x$.
  Note that it is true for every point  $x\in R.$

Every such curve  $l_x$ is given by $n$ polynomials of one
variable: $$l_x(t)=(x_1+\sum_{i=1}^d a^{1,i}
t^i,...,x_n+\sum_{i=1}^d a^{n,i} t^i),$$ where $d\le D.$ Hence we
can associate $l$ with one point
$$(a^{1,0},a^{1,1},...,a^{1,d};a^{2,0},...,a^{2,d};...;a^{n,0},...,a^{n,d})\in
T^n.$$ We can assume without loss of generality that $a^1_d=1.$
Let $\{f_i=0, \ i=1,...,m\}$ ($f_i\in K[x_1,...,x_n]$) be
equations of the variety $S.$ The condition $l_x\subset
\overline{R}$ is equivalent to conditions $f_i(l(t))=0, \ i
=1,..., m.$ The last equations are in fact equivalent to a finite
number of polynomial equations
$$h_\alpha(a^{1,0},a^{1,1},...,a^{1,d};a^{2,0},...,a^{2,d};...;a^{n,0},...,a^{n,d})=0,$$
where $h_\alpha \in K[y_1,...,y_N].$ Equations $h_\alpha=0$ plus
extra conditions $a^{i,0}=x_i, \ i =1,...,n$ and $a^{1,d}=1$ have
solutions in the field $T$, hence they have also  solutions in the
field $K.$

  This means that we can find an affine polynomial
curve $l_x$ over the field $K$ of degree at most $D,$ which  is
contained in $R$ and goes through $x.$  Consequently the variety
$R$ is $K$-uniruled. The proof of Theorem \ref{glowne} is
complete.\end{proof}

\begin{co}(Bia\l ynicki-Birula, \cite{b-b}).
Let $G$ be a non-trivial connected unipotent  group which acts
effectively on an affine variety $X.$ Then $G$ has no isolated
fixed points.
\end{co}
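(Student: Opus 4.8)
The plan is to obtain the corollary as an immediate consequence of Theorem \ref{glowne}, with essentially no new work: the entire analytic content is already carried by that theorem, and what remains is a dimension count. The key observation is that a $K$-uniruled variety is, by definition, of dimension $\geq 1$, whereas an isolated fixed point is a zero-dimensional component of the fixed-point set. So the strategy is to argue by contradiction.

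First I would suppose that $G$ has an isolated fixed point $x\in X$. Being isolated means precisely that the singleton $\{x\}$ is a non-empty (connected) component $R$ of $Fix(G)$, with $\dim R=0$. Next I would apply Theorem \ref{glowne} to this very component: the theorem asserts that \emph{every} non-empty component $R$ of the set of fixed points of $G$ is a $K$-uniruled variety. But a $K$-uniruled variety satisfies $\dim R\geq 1$, contradicting $\dim\{x\}=0$. Hence no isolated fixed point can exist, which is the assertion of the corollary.

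Alternatively, and more concretely, I could invoke the geometric formulation recorded in Proposition \ref{family}(1): through every point of a $K$-uniruled variety there passes an affine polynomial curve lying inside it. Applying this via Theorem \ref{glowne} to the component of $Fix(G)$ containing $x$, one gets a parametrization $\phi:K\to Fix(G)$ with $\phi(0)=x$; since such a parametrization is a non-constant dominant map onto a curve, its image is a positive-dimensional subvariety of $Fix(G)$ through $x$, so $x$ is not isolated. I do not expect any genuine obstacle here, as the difficulty has all been absorbed into Theorem \ref{glowne}. The only point requiring a little care is the convention that the components $R$ in Theorem \ref{glowne} are taken irreducible (or that one first decomposes a connected component into its irreducible pieces), so that the $K$-uniruledness — and hence positivity of dimension — applies to each piece, and in particular rules out the zero-dimensional case.
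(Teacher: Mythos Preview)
Your proposal is correct and matches the paper's approach: the corollary is stated immediately after Theorem \ref{glowne} with no separate proof, since a $K$-uniruled variety has dimension $\geq 1$ by definition and hence cannot be a single point. Your remark about irreducible versus connected components is a fair caveat, but the argument goes through either way.
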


Theorem \ref{glowne} (or rather its proof) suggests  a following
generalization of \cite{jel}:

\begin{theo}\label{glowne1}
Let $G$ be an infinite connected algebraic group which acts
effectively on $K^n,$ $\ n\ge 2.$ Assume that an irreducible
hypersurface $W$ is contained in the set of fixed points of $G.$
Then $W$ is $K$- uniruled.
\end{theo}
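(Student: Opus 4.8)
The plan is to reduce to a one--dimensional subgroup and then split into the additive and the multiplicative case, the latter being the real novelty. Since $G$ is infinite and connected and acts effectively on the affine space $K^n$, its linear (affine) part is positive dimensional (an abelian variety can only act trivially on an affine variety, because every orbit map from a complete variety to $K^n$ is constant), so $G$ contains a closed connected one--dimensional subgroup $H$, isomorphic either to $G_a=(K,+)$ or to $G_m=(K^*,\cdot)$. As $H\subseteq G$ acts effectively and non--trivially, $Fix(H)\neq K^n$, hence $\dim Fix(H)\le n-1$; since $W\subseteq Fix(G)\subseteq Fix(H)$ is irreducible of dimension $n-1$, the hypersurface $W$ is a whole irreducible component of $Fix(H)$. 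If $H\cong G_a$ we are done at once by Theorem \ref{glowne}, which says every component of the fixed--point set of a unipotent group is $K$--uniruled. Thus the heart of the matter is the case $H=G_m$, to which Theorem \ref{glowne} does not apply.

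For $H=G_m$ I would imitate the proof of Theorem \ref{glowne}. First, exactly as there, I pass to an uncountable algebraically closed extension of $K$ (the descent to a countable field at the end is verbatim the same), so that Proposition \ref{stasica} becomes available and it suffices to produce, through a general smooth point $a\in W$, an affine polynomial curve contained in $W$. I choose an irreducible curve $L$ through $a$ which is transverse to $W$ and lies in no orbit of $G_m$ and in no component of $Fix(G_m)$, and form the generically finite, $G_m$--invariant mapping $\Phi\colon L\times G_m\to K^n$, $\Phi(x,g)=g(x)$. As in Theorem \ref{glowne} the invariance forces the non--finiteness locus $S_\Phi$ to be $G_m$--invariant, and $a\in S_\Phi$ because the whole fibre $\{a\}\times G_m$ is contracted to the fixed point $a$.

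Now comes the point where the torus differs from the additive group, and which I expect to be the main obstacle. A $G_m$--invariant irreducible curve is either pointwise fixed or is the closure of a single orbit; but a $G_m$--orbit in affine space, unlike a $G_a$--orbit, need not be closed. The clean dichotomy used in Theorem \ref{glowne} (``if $x\in S_1$ were not fixed then $S_1=\overline{G.x}$ and the fixed point $a$ would lie on a free orbit'') breaks down, since $a$ may perfectly well be the \emph{limit point} of an orbit. I would resolve this using the hypersurface hypothesis, which up to now has only served to make $W$ a component of $Fix(H)$ and which is genuinely necessary here (the statement is false for torus--fixed loci of codimension $\ge 2$). At a general smooth $a\in W$ the tangent representation of $G_m$ on $T_aK^n$ is diagonalizable with $T_aW$ as its weight--zero part, so the normal direction carries a single nonzero weight; by the local linearization of torus actions (valid in arbitrary characteristic) the attracting and repelling sets of $a$ reduce to one $1$--dimensional orbit--closure $\Lambda_a$ through $a$. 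Choosing $L$ with $L\cap\Lambda_a=\{a\}$ (immediate when $n\ge 3$, arrangeable for $n=2$) then guarantees that no orbit--closure component of $S_\Phi$ passes through $a$. Since $S_\Phi$ is purely one dimensional and contains $a$, the component $S_1\ni a$ must be pointwise fixed, and being irreducible through a point lying on the single local component $W$ of $Fix(G_m)$ it is contained in $W$.

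It remains to know that $S_1$ is an affine polynomial curve. Here one needs the analogue of Theorem \ref{gl1} for a product $\Gamma\times G_m$: after compactifying $G_m\subset\Bbb P^1$ the boundary divisor is no longer a tree (it may contain cycles), but an orbit--closure in affine space has at most one limit at finite distance, and a pointwise fixed component behaves as in the proof of Theorem \ref{gl1}, so every component of $S_\Phi$ is still the image of $K$, i.e. an affine polynomial curve. Thus through the general point $a$ we obtain an affine polynomial curve $S_1\subset W$, and Proposition \ref{stasica} (followed by the countable--field descent of Theorem \ref{glowne}) shows that $W$ is $K$--uniruled. The decisive and most delicate step is the identification of a pointwise fixed component of $S_\Phi$ through $a$: this is exactly where the non--closedness of torus orbits must be defeated and where the codimension--one assumption is indispensable.
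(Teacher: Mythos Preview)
Your reduction to a one--dimensional subgroup is the same as the paper's, and for $H\cong G_a$ your appeal to Theorem~\ref{glowne} is fine. The divergence, and the gap, is entirely in the $G_m$ case.

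The claim ``choosing $L$ with $L\cap\Lambda_a=\{a\}$ guarantees that no orbit--closure component of $S_\Phi$ passes through $a$'' is false. Take $K^2$ with $t\cdot(x,y)=(x,ty)$, so $W=\{y=0\}$, $a=(0,0)$, $\Lambda_a=\{x=0\}$, and let $L=\{x=y\}$; then $L\cap\Lambda_a=\{a\}$. One computes $\Phi(L\times G_m)=(K^*\times K^*)\cup\{a\}$, $X=\overline{\Phi(L\times G_m)}=K^2$, and $S_\Phi=\{x=0\}\cup\{y=0\}$. Thus $\Lambda_a$ \emph{is} a component of $S_\Phi$ through $a$, exactly the orbit--closure component you tried to exclude. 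Your condition on $L$ only prevents $\Lambda_a$ from lying in the \emph{image} of $\Phi$; it does nothing to keep $\Lambda_a$ out of the closure $X$ or out of $S_\Phi$. The subsequent sketch of a $\Gamma\times G_m$ version of Theorem~\ref{gl1} is likewise not a proof: you are reasoning about components of $S_\Phi$ as if they were a priori orbit closures or fixed curves, which is precisely what is at issue.

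The paper avoids the whole invariance analysis. It takes $L$ to be a \emph{line} in $K^n$ meeting $Fix(G)$ in finitely many points, so $\Phi(L\times G)\cap W$ is finite. Then $X=\overline{\Phi(L\times G)}$ is a surface and $W$ is a hypersurface in $K^n$; by the affine dimension theorem $X\cap W$ is purely one--dimensional, so some irreducible curve $S\subset X\cap W$ passes through $a$. Since $S$ meets the image of $\Phi$ in only finitely many points, $S\subset\overline{X\setminus\Phi(L\times G)}\subset S_\Phi$. This gives directly a curve $S\subset W$ through $a$ lying in $S_\Phi$, with no need to decide whether other components of $S_\Phi$ are orbit closures. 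Finally, because $L\cong K$, one has $L\times G_m\cong K^*\times K$, so Theorem~\ref{gl1} applies verbatim with $\Gamma=K^*$ (the boundary divisor of $\Bbb P^1\times\Bbb P^1$ minus $K^*\times K$ is still a tree), and $S$ is an affine polynomial curve. The hypersurface hypothesis enters exactly once, in the dimension count $\dim(X\cap W)\ge\dim X+\dim W-n=1$, which is where your intuition that codimension one is essential is vindicated --- but by intersection theory, not by local linearization.
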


\begin{proof} Since $G$ acts effectively on affine space $K^n$ we can
assume by the Chevalley Theorem (see \cite{sha}, Th. C, p.190)
that the group $G$ is affine. In particular it contains either the
subgroup $G_m=(K^*,\cdot, 1)$ or the subgroup $G_a=(K,+,0)$ (see
e.g., \cite{iit}). Thus we can assume that $G$ is either $G_m$ or
it is $G_a.$

As before we can assume that the field $K$ is uncountable. Take a
point $a \in W.$ By Proposition \ref{stasica} it is enough to
prove, that there is an affine parametric curve $S\subset W$
through $a.$ Let $L$ be a line in $K^n$ going through $a$ such
that the set $L\cap Fix(G)$ is finite. Set $L\cap W=\{ a, a_1,...,
a_m\}.$ Now consider a mapping
$$\phi : L\times G\ni (x, g)\to g(x)\in K^n.$$ Observe that
$\phi(L\times G)$ is a union of disjoint orbits of $G.$ This
implies $\phi(L\times G)\cap W=\{ a, a_1,..., a_m\}.$ Take
$X=\overline{\phi(L\times G)}.$ Note that $X\cap W$ is a union of
curves. This means that there is a curve $S\subset X\cap W$, which
contains the point $a.$ However $S\subset \overline{X\setminus
\phi(L\times G)}.$ This implies that $S\subset S_\phi$ and we
conclude by Theorem \ref{gl1}.
\end{proof}

To finish this note we state:

\vspace{2mm} \noindent{\bf Conjecture.} {\it Let $K$ be an
algebraically closed field. Let $G$ be an algebraic group, which
acts effectively on $K^n.$ If $S$ is an irreducible component of
the set of fixed points of $G$, then $S$ is either a point or it
is a $K$-uniruled variety.}

\section*{Acknowledgments} 
We are grateful to Andrzej Bia\l ynicki-Birula  for helpful discussions. We also want to thank
Tadeusz Mostowski  for helpful remarks.


\begin{thebibliography}{99}

\bibitem{b-b} A. Bia\l ynicki-Birula, On fixed point schemes of actions of multiplicative
and additive  groups, Topology 12 (1973), 99-103.


\bibitem{car} J. Carrell, A. Sommese, A generalization theorem of Horrocks, in: Group Action and Vector Fields, in: Lecture Notes in Math. 956, Springer-Verlag 1982.



\bibitem{gross} D. Gross, On the fundamental group of the fixed points of an unipotent
actions, Manuscripta Math. 60 (1988), 487-496.


\bibitem{hor} G. Horrocks, Fixed point schemes of actions of
additive  group actions, Topology 8 (1969), 233-242.

\bibitem{iit} J. Humphreys, Linear Algebraic Groups, Springer-Verlag 1998.


\bibitem{jel} Z. Jelonek,  The set of fixed points of an algebraic group, Bull.
Acad. Polon. Sci. Math. 51 (2003), 69-73.


\bibitem{jel?} Z. Jelonek,
Testing sets for properness of polynomial mappings, Math. Ann. 315 (1999), 1-35.


\bibitem{jel2} Z. Jelonek, Topological characterization of finite mappings, Bull.
Acad. Polon. Sci. Math. 49 (2001), 279-283.


\bibitem{sha} I.R. Shafarevich, Basic Algebraic Geometry,  Springer-Verlang, 1974.

\bibitem{stasica} A. Stasica, Geometry of the Jelonek set, J. Pure Appl. Algebra 137 (1999), 49-55.



\end{thebibliography}
\end{document}